\definecolor{webgreen}{rgb}{0,.5,0}
\definecolor{webbrown}{rgb}{.6,0,0}
\begin{document}

\begin{center}
\vskip 1cm{\LARGE\bf
On variants of Conway and Conolly's\\
Meta-Fibonacci recursions\\
}
\vskip 1cm \large
Abraham Isgur \quad Mustazee Rahman\\
Department of Mathematics\\
University of Toronto\\
Toronto, Ontario M5S 2E4\\
Canada\\
\href{mailto:umarovi@gmail.com}{\tt umarovi@gmail.com}\,, \href{mailto:mustazee.rahman@mail.utoronto.ca}{\tt mustazee.rahman@mail.utoronto.ca}
\end{center}

\vskip .2 in


\begin{abstract}We study the recursions $A(n) = A(n-a-A^k(n-b)) + A(A^k(n-b))$ where $a \geq 0$, $b \geq 1$ are integers and the
superscript $k$ denotes a $k$-fold composition, and also the recursion $C(n) = C(n-s-C(n-1)) + C(n-s-2-C(n-3))$ where $s \geq 0$
is an integer. We prove that under suitable initial conditions the sequences $A(n)$ and $C(n)$ will be defined for all positive integers,
and be monotonic with their forward difference sequences consisting only of 0 and 1. We also show that the sequence generated by
the recursion for $A(n)$ with parameters $(k,a,b) = (k,0,1)$, and initial conditions $A(1) = A(2) = 1$, satisfies $A(E_n) = E_{n-1}$ where
$E_n$ is defined by $E_n = E_{n-1} + E_{n-k}$ with $E_n = 1$ for $1 \leq n \leq k$.
\end{abstract}

\theoremstyle{plain}
\newtheorem{theorem}{Theorem}[section]
\newtheorem{lemma}[theorem]{Lemma}
\newtheorem{proposition}{Proposition}
\newtheorem{corollary}[theorem]{Corollary}

\numberwithin{equation}{section}
\numberwithin{theorem}{section}

\newcommand{\N}{\mathbb N}
\newcommand{\seqnum}[1]{\href{http://www.research.att.com/cgi-bin/access.cgi/as/~njas/sequences/eisA.cgi?Anum=#1}{\underline{#1}}}

\section{Introduction} \label{Sec:intro}

We study the behaviour of sequences defined by two types of recursions:
\begin{equation} \label{eqn:A}A(n) = A(n-a-A^k(n-b)) + A(A^k(n-b)) \end{equation}
\begin{equation} \label{eqn:C}C(n) = C(n-s-C(n-1)) + C(n-s-2-C(n-3)). \end{equation}
In recursion \eqref{eqn:A}, the parameters $a \geq 0, b \geq 1$ and $k \geq 1$ are integers and the superscript $k$ denotes a
$k$-fold composition of $A(n)$. This recursion generalizes one studied by Conway and others corresponding to $k=1, a= 0$
and $b=1$ \cite{Conolly, Mallows}. Grytczuk \cite{Grytczuk} studied one of these generalizations in detail.
Recursion \eqref{eqn:C} is a special case of recursions of the form
\begin{equation} \label{eqn:MF} C(n) = \displaystyle \sum_{i=1}^k C(n-a_i-C(n-b_i)) \end{equation}
where the parameters $a_i \geq 0$ and $b_i \geq 1$ are integers. Recursion \eqref{eqn:MF} with parameters
$k=2,\,a_1=0,\,b_1=1,\,a_2=1$ and $b_2=2$ is a well-known meta-Fibonacci recursion considered by Conolly and
others \cite{Conolly, Tanny2}. As such, recursions of the form \eqref{eqn:MF} are sometimes called Conolly type.
These recursions, in particular recursion \eqref{eqn:C} along with its variants, have received recent attention due to their rich
combinatorial properties under very specific sets of initial conditions (see \cite{Tanny, Ruskey} and the references cited therein).

Given recursions of the form \eqref{eqn:A} and \eqref{eqn:MF} along with some initial conditions, it is not immediate that such
recursions are well-defined for all $n \geq 1$ in the sense that for any $n$, arguments of the form $A^k(n-b), n-a-A^k(n-b)$ or
$n-a-C(n-b)$ lie in the interval $[1,n-1]$. The value of these arguments must necessarily lie within $[1,n-1]$ for the recursive
definition to work. A sequence of positive integers $\{a_n\}$ is called slow-growing if $a_n - a_{n-1} \in \{0,1\}$ for all $n$. In this
paper we derive properties of the initial conditions of \eqref{eqn:A} and \eqref{eqn:C} which guarantee that the recursions are
defined for all positive integers $n$, and that the resulting sequence is slow-growing. Slow-growing meta-Fibonacci sequences have
been the subject of much study (see, for example, \cite{Conolly, Tanny, Mallows, Tanny2} and the references cited therein).

We also consider sequences satisfying (\ref{eqn:A}) with parameters $(k,a,b) = (k, 0,1)$, which have been studied by Grytczuk
\cite{Grytczuk}. For $A(n)$ corresponding to the parameters $(k,a,b) = (2,0,1)$ with initial conditions $A(1) = A(2) = 1$, Grytczuk
found a correspondence between the resulting sequence $A(n)$ and certain operations on binary words. He used this method
to show that $A(F_n) = F_{n-1}$ where $F_n$ are the Fibonacci numbers, and stated that similar phenomenon should hold for $A(n)$
with parameters $(k,a,b) = (k,0,1)$ and initial conditions $A(1) = A(2) = 1$. Namely that $A(E_n) = E_{n-1}$ where $E_n$ is the generalized
Fibonacci sequence defined by $E_n = E_{n-1} + E_{n-k}$ with $E_n = 1$ for $1 \leq n \leq k$. Grytczuk stated that his methods could be
generalized to prove this property but we shall take an alternate - much simpler -  route to prove it.

\section{The Conway type recursion $A(n)$} \label{sec:A}

Consider recursion \eqref{eqn:A} with fixed parameters $(k,a,b)$. Suppose recursion \eqref{eqn:A} is given $b+j$
$(j \geq 0)$ slow-growing initial conditions that are positive integers. We shall say that $A(n)$ is slow-growing until term $m$ if
$A(n) - A(n-1) \in \{0,1\}$ for $n \in [2,m]$. For $n > b+j$ the computation of $A(n)$ requires that both the arguments $A^k(n-b)$ and
$n-a-A^k(n-b)$ in the recursive evaluation of $A(n)$ satisfy $0 < A^k(n-b) < n$ and $0 < n-a-A^k(n-b) < n$. As $a \geq 0$, these two
conditions for term $n$ are equivalent to \begin{equation} \label{N} 0 < A^k(n-b) < n-a \quad \text{for}\; n > b+j.
\end{equation}

We assume that $A(1) = 1$. For $A(b+j+1)$ to be defined, condition \eqref{N} for term $b+j+1$ requires that $A^k(j+1) \in (0, j+1 +b-a)$.
However, the slow-growing and positive initial conditions, along with the fact $A(1) = 1$, imply that $A^k(i)$ is positive and slow-growing
up to term $b+j$. Since $j+1 \leq b+j$, $A^k(j+1)$ lies within the initial conditions, and so the verification of condition \eqref{N} for term
$b+j+1$ depends on the initial conditions. Also, we require that $A(b+j+1) - A(b+j) \in \{0,1\}$ for slow-growth. In turns out that these conditions
are sufficient for $A(n)$ to be well-defined for all positive integers $n$ and be slow-growing. We prove this in the following proposition. But
note that if $b > a$ then $A(b+j+1)$ will be defined because the slow-growth of $A^k(i)$ up to term $b+j$ implies that $0 < A^k(j+1) \leq A^k(1) + j
= j+1 < (b-a) + j+1$, which establishes condition \eqref{N} for term $b+j+1$.

\begin{proposition} \label{thm:A}
Let $A(n) = A(n-a-A^k(n-b)) + A(A^k(n-b))$ with $a \geq 0, b\geq 1$, and $k \geq 1$ integers. Suppose that $A(n)$ is given
$b+j$ initial conditions ($j \geq 0$) that satisfy:
\begin{itemize}
\item[I)] The $b+j$ initial conditions are positive integers, slow-growing, and $A(1) = 1$.
\item[II)] $A(b+j+1)$ is defined and satisfies $A(b+j+1) - A(b+j) \in \{0,1\}$. 
\end{itemize}
Then $A(n)$ is defined for all positive integers $n$, remains slow-growing, and is unbounded. \end{proposition}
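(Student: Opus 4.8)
The plan is to establish the defined-ness and slow-growth simultaneously by a single strong induction on $m$, the claim $H(m)$ being that $A(1),\dots,A(m)$ are all defined, positive, and satisfy $A(i)-A(i-1)\in\{0,1\}$ for $2\le i\le m$; unboundedness will then follow from a short separate argument. Hypotheses I) and II) furnish the base case $H(b+j+1)$ directly (I) covers the first $b+j$ terms and II) the next), so all the work lies in the inductive step: assuming $H(m)$ with $m\ge b+j+1$, I must show $A(m+1)$ is defined and $A(m+1)-A(m)\in\{0,1\}$. Positivity of $A(m+1)$ will be automatic, since the recursion expresses it as a sum of two values of $A$ at earlier (hence positive) terms.

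Before the induction I would record a few consequences of slow growth valid on any initial segment on which $A$ is slow-growing, positive, and has $A(1)=1$. First, telescoping gives $1\le A(i)\le i$, which forces every partial composition into the defined range and thus makes $A^k$ well-defined with $1\le A^k(i)\le i$. Second, an induction on the depth of composition shows that $A^k$ is again slow-growing: expanding $A(A^{r}(i))$, the two possibilities $A^{r}(i)-A^{r}(i-1)\in\{0,1\}$ each keep the forward difference of $A^{r+1}$ in $\{0,1\}$. Consequently the difference sequence of $D_k(i):=i-A^k(i)$ equals $1-(A^k(i)-A^k(i-1))\in\{0,1\}$, so $D_k$ is non-decreasing. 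With these facts the propagation of slow growth is a clean case split: writing $x=A^k(n-b)$ and $y=A^k(n-1-b)$ with $x-y\in\{0,1\}$, the two terms of $A(n)-A(n-1)$ collapse, leaving either $A(n-a-x)-A(n-1-a-x)$ (when $x=y$) or $A(y+1)-A(y)$ (when $x=y+1$), each in $\{0,1\}$.

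The heart of the argument — and the step I expect to be the main obstacle — is securing condition \eqref{N} for each new term, since the naive bound $A^k(n-b)\le n-b$ yields $A^k(n-b)<n-a$ only when $b>a$ and says nothing when $a\ge b$. The key observation is that condition \eqref{N} for term $n$, namely $A^k(n-b)<n-a$, is exactly equivalent (over the integers) to $D_k(n-b)\ge a-b+1$. Now condition II) asserts that $A(b+j+1)$ is defined, which is precisely condition \eqref{N} for $n=b+j+1$, i.e. $D_k(j+1)\ge a-b+1$; and since $D_k$ is non-decreasing on the segment supplied by $H(m)$ and $m+1-b\ge j+1$, we obtain $D_k(m+1-b)\ge D_k(j+1)\ge a-b+1$. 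This is condition \eqref{N} for $n=m+1$, so $A(m+1)$ is defined, and the case split above then gives $A(m+1)-A(m)\in\{0,1\}$, completing $H(m+1)$ and hence the induction.

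Finally, for unboundedness I would argue by contradiction. A bounded slow-growing sequence is eventually constant, say $A(n)=M$ for $n\ge N$; then for all large $n$ the inner iterate $A^k(n-b)$ stabilizes to a fixed positive value $x^\ast$, and the recursion reads $M=A(n-a-x^\ast)+A(x^\ast)=M+A(x^\ast)$, forcing $A(x^\ast)=0$ and contradicting positivity. Hence $A(n)$ is unbounded.
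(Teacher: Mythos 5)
Your proposal is correct and takes essentially the same route as the paper's proof: the same combined induction on definedness and slow growth, the same preliminary facts ($1\le A(i)\le i$ and slow growth of the iterates), the same collapse of the forward difference via a case split on the increment of $A^k$, and the same eventually-constant contradiction for unboundedness. Your monotonicity of $D_k(i)=i-A^k(i)$ anchored at hypothesis II is just a repackaging of the paper's one-step propagation $A^k(n+1-b)\le A^k(n-b)+1<n-a+1$ of condition \eqref{N}, so there is no substantive difference in method.
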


\begin{proof}We induct on $n$ to show that $A(n)$ is defined and slow-growing for $n \geq  b+j+1$. Set $\Delta(n) = A(n) - A(n-1)$. 
Hypotheses $II$ guarantees the existence of $A(b+j+1)$ and that $\Delta(b+j+1) \in \{0,1\}$, which starts off the induction process.

Assume that $A(i)$ is defined and slow-growing until term $n$. We first show that $A(n+1)$ is defined. For this we need
$A^k(n+1-b) \in (0,n+1-a)$ so that condition \eqref{N} is satisfied for $n+1$. The fact that $A(i)$ is slow-growing up to term $n$ and
$A(1) = 1$ imply that for $1 \leq i \leq n$, $A(i) \leq A(1) + i-1 = i$. Thus $A^2(i)$ is defined up to term $n$, satisfies $A^2(1) = 1$, and
remains slow-growing up to $n$ due to $A(i)$ being slow-growing till $n$. Iterating this argument, it follows that for any integer $l \geq 1$,
$A^l(i)$ is defined and slow-growing up to term $n$. As $b \geq 1$ and $n \geq b+j+1$, we have that $2 \leq n-b+1 \leq n$. Thus using
condition \eqref{N} for $n$ and the fact that $A^k(i)$ is slow-growing up to $n$, we deduce that $A^k(n+1-b) \leq A^k(n-b) +1
< n-a+1$ and $A^k(n+1-b) > 0$ since the initial conditions are positive. This shows that $A(n+1)$ is defined.

To verify the slow-growing property at $n+1$ we rearrange the terms of $\Delta(n+1)$ as
$$\Delta(n+1) = [A(A^k(n+1-b)) - A(A^k(n-b))] + [A(n+1-a-A^k(n+1-b)) - A(n-a-A^k(n-b))].$$
The two cases to consider are $\Delta(n+1-b) = 0$ or $\Delta(n+1-b) = 1$ for $n \geq b+j+1$ (note that the argument lies in the interval
$[2,n]$ so there is no problem with well-definedness).\\

\textbf{Case 1:} $\Delta(n+1-b) = 0$. In this case $A(n+1-b) = A(n-b)$, which implies that $A^k(n+1-b) = A^k(n-b)$. 
The compositions are defined as we noted that $A^k(i)$ is defined and slow-growing up to $n$. 
As $A^k(n+1-b) = A^k(n-b)$, the first summand in the rearranged version of $\Delta(n+1)$ vanishes, and after substituting $A^k(n-b)$ for 
$A^k(n+1-b)$ into the second summand we get
\begin{eqnarray*}
\Delta(n+1) &=& A(n+1-a-A^k(n-b)) - A(n-a-A^k(n-b))\\
&=& \Delta(n-a-A^k(n-b) + 1). \end{eqnarray*} 
Condition \eqref{N} for $n$ implies that $1 < n-a-A^k(n-b) + 1 < n+1$. This bound for the argument $n-a-A^k(n-b) + 1$ guarantees that
$\Delta(n-a-A^k(n-b) + 1)$ is defined as the argument lies in $[2,n]$. The induction hypothesis now implies that $\Delta(n+1) \in \{0,1\}$.\\

\textbf{Case 2:} $\Delta(n+1-b) = 1$. This implies that $A(n+1-b) = A(n-b) + 1$. Using the fact that $A^{k-1}(i)$ is slow-growing
up to $n$, and that $A(n-b) + 1 \leq n-b+1 \leq n$ due to $b \geq 1$, it follows that $A^k(n+1-b) = A^{k-1}(A(n-b)+1) = A^k(n-b) + \delta$
where $\delta \in \{0,1\}$. If $\delta = 0$ then the proof goes through the same route as case 1 because we get $A^k(n+1-b) = A^k(n-b)$
as above. If $\delta = 1$ then substituting $A^k(n+1-b)$ with the value $A^k(n-b) + 1$ gives 
\begin{eqnarray*} \Delta(n+1) &=& A(A^k(n-b)+1) - A(A^k(n-b)) \\
&+& A(n+1-a-A^k(n-b)-1) - A(n-a-A^k(n-b))\\
&=& A(A^k(n-b)+1) - A(A^k(n-b))\\
&=& \Delta(A^k(n+1-b)) \quad \text{since}\; A^k(n+1-b) - 1 = A^k(n-b). \end{eqnarray*}
The last quantity $\Delta(A^k(n+1-b)) \in \{0,1\}$ because $2 \leq A^k(n+1-b) \leq n$ by condition \eqref{N} for term $n$, and the fact
that $A^k(n+1-b) = A^k(n-b) + 1$.

Induction shows that $A(n)$ is defined and slow growing for all $n$. $A(n)$ must be unbounded, for otherwise, there is a $m$ 
such that $A(m+i) = M$ for all $i \geq 0$. Then for $i \geq b$, the definition of $A(n)$ and the boundedness assumption implies that
$M = A(m+i) = A(m+i-a-\hat{M}) + A(\hat{M})$ where $\hat{M} = A^{k-1}(M) \in [1,M]$. Set $i = \max\,\{b, \hat{M} + a\}$
to get that $M = A(\hat{M}) + M$; a contradiction. \end{proof}

When $A(n)$ is considered with parameters $(k,a,b)$ satisfying $b > a$, and initial conditions $A(1) = \cdots = A(b+j) = 1$ then it generates
a slow growing and unbounded sequence by Proposition \ref{thm:A}. The sequences generated by $A(n)$ with parameters $(k,0,1)$ and
initial  conditions $A(1) = A(2) = 1$ have been studied by Grytczuk as mentioned in the introduction. See Table \ref{tbl:A} for values of $A(n)$
with $k=2$. Fix the parameter $k$ and consider the corresponding sequence $E_n$ defined by $E_n = E_{n-1} + E_{n-k}$ with initial conditions
$E_n = 1$ for $1 \leq n \leq k$. We now prove the property of $A(n)$ which relates to $E_n$ as was mentioned in the introduction (see Table
\ref{tbl:A} to observe this phenomenon for $k=2$).

\begin{table}[!ht]
\fontsize{10}{10}\selectfont
\caption{\small$A(n)$ with parameters $(2,0,1)$ and $A(1) = A(2) = 1$.}\label{tbl:A}
\begin{tabular}{|r | r r r r r r r r r r r r r r r r r r r r r r|}
\hline
$n$&1&\textbf{2}&\textbf{3}&4&\textbf{5}&6&7&\textbf{8}&9&10&11&12&\textbf{13}&14&15&16&17&18&19&20&\textbf{21}&22\\\hline
$A(n)$&1&\textbf{1}&\textbf{2}&3&\textbf{3}&4&5&\textbf{5}&6&7&7&8&\textbf{8}&9&10&11&12&12&12&13&\textbf{13}&14\\\hline
\end{tabular}
\end{table}

\begin{theorem} \label{thm:B}
Let $A(n) = A(A^k(n-1)) + A(n-A^k(n-1))$ with $A(1) = A(2) = 1$. Then $A(E_n) = E_{n-1}$ for $n\geq 2$.\end{theorem}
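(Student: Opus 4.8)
The plan is to prove $A(E_n) = E_{n-1}$ by strong induction on $n$, feeding everything through the recursion together with the two arithmetic identities that $E_n$ satisfies, namely $E_n - E_{n-k} = E_{n-1}$ and $E_{n-1} = E_{n-2} + E_{n-1-k}$ (both immediate from $E_n = E_{n-1} + E_{n-k}$). Assuming $A(E_m) = E_{m-1}$ for all relevant $m < n$, I would evaluate the recursion at $E_n$,
$$A(E_n) = A(A^k(E_n - 1)) + A(E_n - A^k(E_n - 1)),$$
and aim to show that the inner iterate satisfies $A^k(E_n - 1) = E_{n-k}$. Granting this, the first identity turns $E_n - A^k(E_n - 1)$ into $E_{n-1}$, so that $A(E_n) = A(E_{n-k}) + A(E_{n-1})$; the induction hypothesis evaluates these as $E_{n-k-1}$ and $E_{n-2}$, and the second identity collapses the sum to $E_{n-1}$, closing the induction. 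Thus the whole theorem rests on the single lemma $A^k(E_n - 1) = E_{n-k}$.

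For this lemma the natural route is first to establish the \emph{flat top} statement $A(E_n - 1) = E_{n-1}$ for all sufficiently large $n$; that is, $A$ takes the same value on $E_n - 1$ and $E_n$, so that the forward difference $\Delta(E_n) = A(E_n) - A(E_n - 1)$ vanishes. Once $A(E_n - 1) = E_{n-1}$ is known, the lemma follows immediately, since then $A^k(E_n - 1) = A^{k-1}(E_{n-1})$, and iterating the induction hypothesis $A(E_m) = E_{m-1}$ downward from $m = n-1$ gives $A^{k-1}(E_{n-1}) = E_{n-k}$. Everything therefore reduces to showing that the top step of each block $[E_{n-1}, E_n]$ is flat.

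Establishing $\Delta(E_n) = 0$ is where I expect the real work to lie, and it is the main obstacle. The difficulty is that the first application of $A$ to $E_n - 1$ probes the interior of the block $(E_{n-1}, E_n)$, which the induction hypothesis, phrased only at the endpoints $E_m$, does not directly control; and I cannot shortcut this by merely tracking how the iterate drifts from $E_{n-j}$, because $A(E_n - 1)$ lands on the endpoint $E_{n-1}$ precisely when the flat-top statement already holds. I would therefore carry the flat-top statement inside the induction itself, proving $A(E_n) = E_{n-1}$ and $A(E_n - 1) = E_{n-1}$ simultaneously, and control the block using slow-growth (Proposition~\ref{thm:A}) together with the forward-difference recursion extracted in its proof, which for parameters $(k,0,1)$ reads $\Delta(n+1) = \Delta(n + 1 - A^k(n))$ or $\Delta(n+1) = \Delta(A^k(n))$ according to the values of $\Delta(n)$ and $A^k(n) - A^k(n-1)$. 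Applying this at $n+1 = E_n$ should express $\Delta(E_n)$ as $\Delta(E_{n-1})$ once I know that the interior iterates $A^k(E_n - 1)$ and $A^k(E_n - 2)$ both equal $E_{n-k}$, after which $\Delta(E_n) = \Delta(E_{n-1}) = 0$ propagates from a base case. The delicate step, the one most likely to need care, is justifying those two interior iterate values, equivalently that the gap between $A^j(E_n - i)$ and $E_{n-j}$ closes within $k$ steps for $i \in \{1,2\}$; this is exactly where the simultaneous flat-top hypothesis at smaller indices must be fed back in. Small values of $n$, and the degenerate behaviour near the initial block (as seen with $A(E_4 - 1) \neq E_3$ for $k = 2$ in Table~\ref{tbl:A}), would be handled directly as base cases.
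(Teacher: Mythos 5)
Your reduction is essentially the paper's own key step: the identity $A^k(E_n-1)=E_{n-k}$ is exactly the claim proved inside the paper's induction, and your algebra from that identity to $A(E_n)=E_{n-1}$ is correct. The gap is in how you propose to reach it. You want the flat-top statement $A(E_n-1)=E_{n-1}$ at \emph{every} large index, proved via the forward-difference recursion; but that recursion, applied at $E_n$, reads $\Delta(E_n)=\Delta\bigl(E_n-A^k(E_n-2)\bigr)$ or $\Delta(E_n)=\Delta\bigl(A^k(E_n-1)\bigr)$, so it presupposes the values $A^k(E_n-1)$ and $A^k(E_n-2)$, which in turn require knowing $A(E_n-1)$ and $A(E_n-2)$ --- precisely what flat top at $n$ is supposed to deliver. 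Worse, because you evaluate the recursion at $E_n$ with hypotheses only at indices $<n$, you do not yet know $A(E_n)=E_{n-1}$, so you cannot even invoke slow growth to get the dichotomy $A(E_n-1)\in\{E_{n-1}-1,E_{n-1}\}$; all the induction gives is $E_{n-2}\leq A(E_n-1)\leq E_{n-2}+E_{n-k}-1$. As it stands the argument is circular, and you also note yourself that this is where the work lies without supplying the device that breaks the circle.

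The paper breaks it with two ideas absent from your outline. First, it never claims flat top at every index: its hypothesis 2 asserts $A(E_{n-j}-1)=E_{n-j-1}$ for \emph{some} $j\in[0,k-1]$, i.e., once per window of $k$ consecutive indices, and this weaker statement already closes the chain $A^{j+1}(E_n-1)=E_{n-j-1}$ and hence $A^k(E_n-1)=E_{n-k}$ (flat top genuinely fails at $n=4$ for $k=2$, and the paper never establishes it for all large $n$, so your stronger statement may be harder than the theorem itself). Second, and crucially, the paper shifts the bookkeeping so that interior values of the new block are never needed: knowing hypotheses 1--3 at $n$, it proves $A(E_{n+1})=E_n$ not by plugging $E_{n+1}$ into the recursion (which would require $A(E_{n+1}-1)$), but by writing $A^{-1}(\{E_n\})=[\alpha,\beta]$ (using slowness and unboundedness from Proposition \ref{thm:A}) and evaluating the recursion at $\beta+1$ and at $\alpha$, where the first iterate is controlled because $A(\beta)=E_n$ and $A(\alpha-1)=E_n-1$ exactly. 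This yields $\beta\geq E_{n+1}$ and $\alpha\leq E_{n+1}$, hence $A(E_{n+1})=E_n$. The third hypothesis, $A(E_n+1)=E_{n-1}+1$, is what pins down that $A(i)>E_{n-1}$ iff $i>E_n$, making those bounds on $\alpha,\beta$ effective. Without a substitute for this preimage-interval device (or for the weaker windowed flat-top invariant plus hypothesis 3), your simultaneous induction does not close.
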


\begin{proof}
It is immediate from the initial conditions of $E_n$ and of $A(i)$ that for $2 \leq j \leq k$, $A(E_j) = A(1) = 1 = E_{j-1}$. 
For the other cases we will use a three statement induction argument on the index $n$ of $E_n$. Throughout this
argument we use that $A(i)$ is slow growing and unbounded as per Proposition \ref{thm:A}. The hypotheses are
that for $n > k$, \begin{itemize}
\item[1.] $A(E_n) = E_{n-1}$
\item[2.] $A(E_{n-j} - 1) = E_{n-j - 1}$ for some $0 \leq j \leq k-1$
\item[3.] $A(E_n + 1) = E_{n-1} + 1$
\end{itemize}

An easy inductive argument or calculation shows that for $1 \leq j \leq k,\; E_{k+j} = j + 1$, and $A(j+1) = j$ (see Table \ref{tbl:A} for the $k=2$ case).
Hence, for $1 \leq j \leq k$, $A(E_{k+j}) = A(j+1) = j = E_{k+j - 1}$. For the base case $k+1$, we get $A(E_{k+1}) = A(2) = 1 = E_k,\;
A(E_{k+1} - 1) = A(1) = 1 = E_k$  and $A(E_{k+1} + 1) = A(3) = 2 = E_k + 1$. Assume that the three hypotheses hold up to $n$, and note since
$A(i)$ is slow and unbounded, $A^{-1}(\{E_n\}) = [\alpha, \beta]$. In the following, if $k=1$ then take $A^{k-1}(i) = i$.

\begin{eqnarray*}E_n + 1 = A(\beta + 1) &=& A(\beta + 1 - A^{k-1}(E_n)) + A(A^{k-1}(E_n))\\
&=& A(\beta + 1 - E_{n+1 - k}) + E_{n-k} \quad \text{(by hypothesis 1)}.\end{eqnarray*}

Therefore, $A(\beta + 1 - E_{n+1 - k}) = E_n - E_{n-k} + 1 = E_{n-1} + 1$. Hypothesis 1 and 3 for $n$, and the slow-growing property imply
that $A(i) > E_{n-1}$ if and only if $i > E_n$. So $\beta + 1 - E_{n+1 - k} > E_n$, and hence $\beta \geq E_n + E_{n+1-k} = E_{n+1}$.
Similarly, $A(\alpha - 1) = E_n - 1$ and\begin{equation*}E_n = A(\alpha) = A(\alpha - A^{k-1}(E_n - 1)) + A^k(E_n - 1).\end{equation*}

By using the first two hypotheses along with the fact that $A(i)$ is slow-growing, we claim that $A^k(E_n - 1) = E_{n-k}$.
Indeed, $A(E_n - 1) = E_{n-1}$ or $A(E_n - 1) = E_{n-1}-1$ by hypothesis 1 and the fact that $A(i)$ is slow-growing. In the former
case we use hypothesis 1 repeatedly to deduce that $A^k(E_n-1) = E_{n-k}$. In the latter case, we get $A^2(E_n-1) = A(E_{n-1} - 1)$
but $A(E_{n-1}-1) = E_{n-1}$ or $E_{n-2}-1$ by hypothesis 1 and the slow-growth of $A(i)$. If $A(E_{n-1}-1) = E_{n-2} - 1$ we keep
repeating the argument, and use hypothesis 2 for $n$ to eventually find a $j \in [0,k-1]$ such that $A(E_{n-j} - 1) = E_{n-j-1}$. At that
point we have the equation $A^{j+1}(E_n - 1) = E_{n-j-1}$ to which we compose $A(i)$ the remaining $k - j - 1$ times to deduce that
$A^k(E_n - 1) = E_{n-k}$ via hypothesis 1.

Analogously, we deduce that $A^{k-1}(E_n - 1)  = E_{n-k+1} - \delta$ where $\delta \in \{0,1\}$. Therefore $E_{n-1} = E_n - E_{n-k} =
A(\alpha - E_{n-k+1} + \delta)$, which implies that $\alpha - E_{n-k+1} + \delta \leq E_n$ because $A(i) > E_{n-1}$ for $i > E_n$. Thus
$\alpha \leq E_n + E_{n-k+1} = E_{n+1}$, and with $\beta \geq E_{n+1}$ from the previous paragraph, we deduce that $A(E_{n+1}) = E_n$.

In order to support hypothesis 2 for $n+1$, we note that if $A(E_{n-j} - 1) = E_{n-j - 1}$ for any $0 \leq j < k-1$ then there is nothing to show. 
So we can assume from hypothesis 2 for $n$ that $A(E_{n-k+1} - 1) = E_{n-k}$ and $A(E_{n-j} - 1) = E_{n-j-1} - 1$ for $0 \leq j < k-1$.
Aiming for a contradiction, if $A(E_{n+1} - 1) \neq E_n$ then it would follow that $A(E_{n+1} - 1) = E_n - 1$ because $A(E_{n+1}) = E_n$
as shown above, and $A(i)$ is slow-growing. Then from the assumption above for hypothesis 2, we get $A^k(E_{n+1} - 1) = E_{n-k+1} - 1$
after applying $A(i)$ to both sides of  $A(E_{n+1}-1) = E_n -1$ a total of $k-1$ times. But now \begin{eqnarray*}
E_n = A(E_{n+1}) &=& A(E_{n+1} - E_{n-k+1} + 1) + A(E_{n-k+1} - 1) \\
&=& A(E_n  + 1) + E_{n-k} \quad \text{(recall that $A(E_{n-k+1}-1) = E_{n-k}$).} \end{eqnarray*}
Therefore, $A(E_n + 1) = E_n - E_{n-k} = E_{n-1}$; a contradiction to hypothesis 3 for $n$.

To establish hypothesis 3 for $n+1$ we note that
\begin{eqnarray*}A(E_{n+1} + 1) &=& A(E_{n+1} + 1 - E_{n-k+1}) + E_{n-k} \quad \text{(by hypothesis 1)}\\
&=& A(E_n + 1) + E_{n-k} \\
&=& E_{n-1} + 1 + E_{n-k} = E_{n} + 1 \quad \text{(by hypothesis 3 for $n$).}\end{eqnarray*}
This completes the inductive argument.
\end{proof}

One implication of Theorem \ref{thm:B} is that if $\lim_{n \to \infty} \frac{A(n)}{n}$ exists then it must be the same as $\lim_{n \to \infty} \frac{E_{n-1}}{E_n}$.
The latter limit is $\phi_k^{-1}$ where $\phi_k$ is the largest positive root of the polynomial $x^k -x^{k-1} -1$: the characteristic polynomial of the
recursion $E_n = E_{n-1} + E_{n-k}$. However, it is an open question as to whether $\frac{A(n)}{n}$ converges as $n \to \infty$ for any $k \geq 2$.
For $k=1$, Mallows showed in \cite{Mallows} that $\lim_{n \to \infty} \frac{A(n)}{n} = \frac{1}{2}$, settling a question of Conway.

\section{The Conolly type recursion $C(n)$}

Consider the general Conolly type recursion \eqref{eqn:MF}. Fix a particular recursion from the family \eqref{eqn:MF} 
given with $r \geq \max\,\{b_i\}$ initial conditions that are positive integers. For the resulting sequence $C(n)$, define
$\Delta(n) = C(n) - C(n-1)$ and $\Delta_i(n) = C(n-a_i-C(n-b_i)) - C(n-1-a_i-C(n-1-b_i))$, so that $\Delta(n) = \sum_{i=1}^k
\Delta_i(n)$. The following lemma appears first in \cite{Tanny} as Lemma 6.1, where the authors deal with another recursion
of the form in \eqref{eqn:MF}.

\begin{lemma} \label{lem1}
Suppose that for $m > r$ the sequence $C(n)$ is defined up to term $m+1$ and slow-growing up to term $m$.
Then $\Delta_i(m+1) \in \{0,1\}$ for $1 \leq i \leq k$. When $k=2$, if $\Delta(m+1) \notin \{0,1\}$ then $\Delta(m+1) = 2$,
and $\Delta_1(m+1) = \Delta_2(m+1) = 1$. Also when $k=2$, for $r+1 < n \leq m+1$, $\Delta_i(n) = 1$ if and only if
$\Delta(n-b_i) = 0$ and $\Delta(n-a_i - C(n-1-b_i)) = 1$.\end{lemma}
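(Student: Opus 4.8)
The plan is to analyze each $\Delta_i(n)$ directly by comparing the two arguments appearing in its definition, and to exploit the slow-growth hypothesis to keep those arguments under control. Writing out $\Delta_i(n) = C(n - a_i - C(n-b_i)) - C(n-1 - a_i - C(n-1-b_i))$, the only way the two arguments can differ is through the term $C(n-b_i) - C(n-1-b_i) = \Delta(n-b_i)$. Since $b_i \geq 1$ and $m > r \geq \max\{b_i\}$, the index $n - b_i$ lands in the slow-growing range, so $\Delta(n-b_i) \in \{0,1\}$. This immediately suggests a two-case split on the value of $\Delta(n-b_i)$.

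In the first case $\Delta(n-b_i) = 0$, so $C(n-b_i) = C(n-1-b_i)$ and the first argument exceeds the second by exactly one; thus $\Delta_i(n)$ collapses to a single forward difference $\Delta\bigl(n - a_i - C(n-b_i)\bigr)$ of the sequence $C$, evaluated at an index that is itself a valid recursion argument. In the second case $\Delta(n-b_i) = 1$, so $C(n-b_i) = C(n-1-b_i)+1$ and the two arguments coincide, forcing $\Delta_i(n) = 0$. Since any single forward difference of $C$ lies in $\{0,1\}$ by slow-growth, both cases yield $\Delta_i(n) \in \{0,1\}$, which is the first assertion (taking $n = m+1$). The same split proves the biconditional in the last assertion: $\Delta_i(n) = 1$ can occur only in the first case (the second gives $\Delta_i(n) = 0$), and there it equals $\Delta(n - a_i - C(n-b_i))$; using $C(n-b_i) = C(n-1-b_i)$ in that case rewrites the argument as $n - a_i - C(n-1-b_i)$, so $\Delta_i(n) = 1$ exactly when $\Delta(n-b_i) = 0$ and $\Delta(n - a_i - C(n-1-b_i)) = 1$.

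For the middle assertion I would simply observe that when $k = 2$ the relation $\Delta(m+1) = \Delta_1(m+1) + \Delta_2(m+1)$ together with the first assertion gives $\Delta(m+1) \in \{0,1,2\}$; hence if $\Delta(m+1) \notin \{0,1\}$ it must equal $2$, which is possible only when $\Delta_1(m+1) = \Delta_2(m+1) = 1$.

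The main obstacle here is not conceptual but bookkeeping: I must verify at each step that every index to which I apply slow-growth or the forward-difference notation actually lies in the admissible range. Concretely, the second argument $n-1 - a_i - C(n-1-b_i)$ is a legitimate argument in the (defined) evaluation of $C(n-1)$, so it lies in $[1, n-2]$; in the first case this forces the shifted argument $n - a_i - C(n-b_i)$ into $[2, n-1] \subseteq [2,m]$, exactly where $\Delta$ is defined and slow. Similarly the constraint $n > r+1 \geq \max\{b_i\}+1$ is what guarantees $n - b_i \geq 2$, so that $\Delta(n-b_i)$ is meaningful. Once these range checks are in place the remaining argument is a routine case analysis.
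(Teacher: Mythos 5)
Your proposal is correct and follows essentially the same route as the paper: a case split on $\Delta(n-b_i) \in \{0,1\}$ (equal arguments when it is $1$, a single forward difference $\Delta(n-a_i-C(n-b_i))$ when it is $0$), the same counting argument for the $k=2$ middle assertion, and the same range check via the admissibility of the arguments in the recursive evaluation of $C(n-1)$. No gaps; your bookkeeping of the index ranges matches what the paper's proof relies on.
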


\begin{proof}By the assumptions that $C(n)$ is slow-growing up to term $m$ and $b_i \geq 1$, it follows that $\Delta(m+1-b_i) \in \{0,1\}$.
If $\Delta(m+1-b_i) = 1$ then $$[m+1-a_i-C(m+1-b_i)] - [m-a_i-C(m-b_i)] = 1 - \Delta(m+1-b_i) = 0.$$ Thus $\Delta_i(m+1) = 0$ because
the arguments in both terms of the difference are equal. If $\Delta(m+1-b_i) = 0$ then $m+1-a_i-C(m+1-b_i) = 1 + (m-a_i-C(m-b_i))$,
and hence $\Delta_i(m+1) = \Delta(m+1-a_1-C(m-b_i))$. Note that $m+1-a_1-C(m-b_i) \in [2,m]$ since the existence of $C(m)$
requires that $C(m-b_i) \in (0,m-a_i)$. Thus $\Delta(m+1-a_1-C(m-b_i)) \in \{0,1\}$.

When $k =2$, since each $\Delta_i(m+1) \in \{0,1\}$, if $\Delta(m+1) \notin \{0,1\}$ then we must have each $\Delta_i(m+1) = 1$
and $\Delta(m+1) = \Delta_1(m+1) + \Delta_2(m+1) = 2$. Furthermore, the same calculations in the previous paragraph
with $m+1$ replaced with $n$ shows that for $r+1 < n \leq m+1$, $\Delta_i(n) = 1$ if and only if $\Delta(n-b_i) = 0$ and
$\Delta(n-a_i - C(n-1-b_i)) = 1$.
\end{proof}

Now we focus on the recursion $C(n) = C(n-s-C(n-1)) + C(n-s-2-C(n-3))$ in \eqref{eqn:C} given with $r \geq 3$ initial conditions that are
positive integers. For $n > r$, the term $C(n)$ is defined if and only if $C(n-1) \in (0,n-s)$ and $C(n-3) \in (0,n-s-2)$; in fact, the second
condition suffices. For notational convenience, let $C_1(n) = C(n-s-C(n-1))$ and $C_2(n) = C(n-2-s-C(n-2))$, and note that
$C_1(n-2) = C_2(n)$. Let $\Delta(n), \Delta_1(n)$ and $\Delta_2(n)$ be defined as before.

\begin{lemma} \label{lem2}
Suppose $C(n)$ is defined and slow-growing up until term $m > r$, and in addition there is no $n$ with $3 \leq n \leq m$
such that $\Delta(n) = \Delta(n-1) = 1$. Then for all $n$ with $r < n \leq m$, $C_1(n)-C_2(n) \in \{0,1\}$.\end{lemma}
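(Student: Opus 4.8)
The plan is to reduce the claim to a single shifted difference. The identity $C_2(n) = C_1(n-2)$ recorded just above gives $C_1(n) - C_2(n) = C_1(n) - C_1(n-2)$, so it suffices to prove $C_1(n) - C_1(n-2) \in \{0,1\}$ for $r < n \le m$. Writing the argument of the first summand as $g(n) = n - s - C(n-1)$, so that $C_1(n) = C(g(n))$, the whole question becomes: by how much does $g$ move when its index drops by $2$, and what does slow-growth of $C$ then say about $C(g(n)) - C(g(n-2))$?

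First I would compute the argument shift. A direct calculation gives
\[
g(n) - g(n-2) = 2 - \big(C(n-1) - C(n-3)\big) = 2 - \Delta(n-1) - \Delta(n-2).
\]
Since $n > r \ge 3$ forces $3 \le n-1 \le m$, the standing hypothesis --- that no index $n'$ in $[3,m]$ has $\Delta(n') = \Delta(n'-1) = 1$ --- applies at $n' = n-1$ and rules out $\Delta(n-1) = \Delta(n-2) = 1$. As $C$ is slow-growing, each of $\Delta(n-1), \Delta(n-2)$ lies in $\{0,1\}$, so their sum is $0$ or $1$ and hence $g(n) - g(n-2) \in \{1,2\}$.

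It then remains to split into the two cases. If the shift equals $1$, then $C_1(n) - C_1(n-2) = \Delta(g(n)) \in \{0,1\}$ immediately by slow-growth. If the shift equals $2$, then $C_1(n) - C_1(n-2) = \Delta(g(n)) + \Delta(g(n)-1)$ is a sum of two consecutive forward differences, and I would again invoke the no-two-consecutive-ones hypothesis, this time at $n' = g(n)$, to conclude the sum lies in $\{0,1\}$. The part I expect to require the most care is verifying that $g(n)$ sits in the range $[3,m]$ where the hypothesis is available: positivity of the argument of $C_2(n)$ (forced by $C(n)$ being defined) gives $g(n-2) \ge 1$ and hence $g(n) \ge 3$ in this case, while $C(n-1) \ge 1$ and $s \ge 0$ give $g(n) \le n-1 \le m$. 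Granting these index checks, both cases close and the lemma follows.
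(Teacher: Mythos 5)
Your proposal is correct and takes essentially the same route as the paper: both exploit $C_2(n)=C_1(n-2)$, compute the argument shift $d = 2 - \Delta(n-1) - \Delta(n-2)$, and split into cases where a shift of $1$ is handled by slow-growth and a shift of $2$ by the no-consecutive-increments hypothesis, with the same index-range checks. The only cosmetic difference is that you use the hypothesis to rule out a zero shift, while the paper simply keeps $d=0$ as a trivial case giving $C_1(n)=C_2(n)$.
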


\begin{proof}
By the assumption of slow-growth, $C(n-1) - C(n-3)\in \{0,1,2\}$. Thus the difference $d = [n-s-C(n-1)] - [n-s-2-C(n-3)]$ lies in $\{0,1,2\}$ as well.
We have that $$C_1(n) = C(n-s-C(n-1)) = C(n-s-2-C(n-3) + d).$$ If $d=0$ then $C_1(n) = C_2(n)$. If $d=1$ then $C_1(n) - C_2(n) =
\Delta(n-s-1-C(n-3)) \in \{0,1\}$ because $2 \leq n-s-1-C(n-3) \leq m$ where the first inequality follows since $C(n)$ is defined and the second
follows as $n \leq m,\,s\geq 0$ and $C(n-3) \geq 1$. Lastly, if $d=2$ then $C_1(n)-C_2(n) = \Delta(n-s-C(n-3)) + \Delta(n-s-C(n-3)-1)$.
Our assumption guarantees that there cannot be two consecutive differences of 1 since the argument of each $\Delta$ term is within $[2,m]$.
Thus we get $C_1(n)-C_2(n) \in \{0,1\}$.
\end{proof}

The consequence of Lemma \ref{lem2} is that so long as the assumptions are met up to $m > r$, for $ n \in [r+1,m]$, if $\Delta(n)$ is
even then $C_1(n) = C_2(n) = \frac{C(n)}{2}$. If $\Delta(n)$ is odd then $C_1(n) = C_2(n)+1 = \frac{C(n)+1}{2}$.

\begin{theorem} \label{thm:C}
Suppose that the recursion for $C(n)$ in \eqref{eqn:C} is given $r \geq 3$ initial conditions that are positive integers.
Suppose the following holds:
\begin{itemize}
\item[I)]The term $C(r+1)$ is defined via the recursion for $C(n)$.
\item[II)]There does not exists any $3 \leq n \leq r$ such that $\Delta(n) = \Delta(n-1) = 1$.
\item[III)]The sequence $C(n)$ is slow-growing up to term \,$r+1$.\footnote{Clearly if $C(n)$ is slow-growing up to term
$r+1$ then $C(r+1)$ must be defined. But we still keep hypothesis $I$ for clarity of exposition in the proof.}
\end{itemize}
Then $C(n)$ is defined and slow-growing for all $n \geq 1$. There also does not exists any $n \geq 3$ such that $\Delta(n) = \Delta(n-1) = 1$.
\end{theorem}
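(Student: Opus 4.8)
The plan is to prove all three conclusions simultaneously by a single induction on $N \ge r$, where the inductive statement $S(N)$ reads: \emph{$C$ is defined and slow-growing up to term $N+1$, and there is no $3 \le n \le N$ with $\Delta(n) = \Delta(n-1) = 1$}. The base case $S(r)$ is precisely hypotheses I, III (giving definedness and slow-growth up to $r+1$) together with II (giving the absence of consecutive $1$'s up to $r$), so there is nothing to check there. In the inductive step I would first dispose of definedness: since $C(N+1)$ is defined, the ``second'' argument condition forces $C(N-2) < N-1-s$, and slow-growth then gives $C(N-1) \le C(N-2)+1 < N-s$, which is exactly the (sufficient) condition for $C(N+2)$ to be defined. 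Positivity is automatic, since a slow-growing sequence is non-decreasing and the initial terms are positive.

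The two substantive tasks are to push the no-consecutive-$1$'s property and slow-growth one step further, and for both I would lean on a single implication, valid whenever Lemma \ref{lem2} applies at the indices $n-1, n-2$:
\[
\Delta_2(n+1) = 1 \ \Longrightarrow\ \Delta(n-1) = 1 .
\]
This holds because $C_2(n+1) = C_1(n-1)$ and $C_2(n) = C_1(n-2)$, so by the consequence of Lemma \ref{lem2} (which I read as $C_1(m) = \lceil C(m)/2 \rceil$) we get $\Delta_2(n+1) = C_1(n-1) - C_1(n-2) = \lceil C(n-1)/2\rceil - \lceil C(n-2)/2\rceil$; a ceiling difference equal to $1$ forces $C(n-1) > C(n-2)$, i.e.\ $\Delta(n-1) = 1$. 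Applying this with $n = N$: if $\Delta(N+1) = \Delta(N) = 1$, then Lemma \ref{lem1} (its clause that $\Delta_1(n)=1$ requires $\Delta(n-1)=0$) rules out $\Delta_1(N+1)=1$, so $\Delta_2(N+1)=1$, whence $\Delta(N-1)=1$; but then $\Delta(N-1)=\Delta(N)=1$ contradicts $S(N)$, so no new consecutive pair appears at $N+1$ and the property extends to $N+1$. With that in hand I would invoke Lemma \ref{lem2} up to term $N+1$ and rule out $\Delta(N+2)=2$: by Lemma \ref{lem1} this would force $\Delta_1(N+2)=\Delta_2(N+2)=1$, hence $\Delta(N+1)=0$, $\Delta(N-1)=0$, and (by the displayed implication with $n=N+1$) $\Delta(N)=1$; Lemma \ref{lem1} also yields $\Delta(p)=\Delta(q)=1$ for $p=N+2-s-C(N)$ and $q=N-s-C(N-2)$. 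Since $p-q = 2-(\Delta(N)+\Delta(N-1)) = 1$, these are consecutive $1$'s inside $[2,N+1]$, contradicting the property just established. Thus $\Delta(N+2)\in\{0,1\}$ and $S(N+1)$ holds.

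The heart of the argument — and the only place needing real care — is exactly this coupling: slow-growth and the no-consecutive-$1$'s condition cannot be separated, because a fresh increment in the lagged summand $C_2$ (equivalently in $C_1$ two steps back) is forced to arise from a genuine increment $\Delta(n-1)=1$, which then collides with a neighbouring $1$. The main obstacle I anticipate is bookkeeping at the start of the induction: the consequence of Lemma \ref{lem2} is only guaranteed for indices in $[r+1,\,\cdot\,]$, so the displayed implication is unavailable when $n-2 \le r$. I would therefore verify the first few steps (those $N$ with $N \le r+2$) directly from hypotheses I--III, tracing the relevant arguments back into the initial block, before the general mechanism takes over for $N \ge r+3$. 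Once the induction closes, definedness and slow-growth for all $n \ge 1$ and the global absence of consecutive $1$'s both follow at once.
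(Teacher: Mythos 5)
Your core mechanism is sound, and it is essentially the paper's own proof reorganized as a forward induction: the identity $C_2(n)=C_1(n-2)$, the ``if and only if'' clause of Lemma \ref{lem1}, and the consequence of Lemma \ref{lem2} read as $C_1(n)=\lceil C(n)/2\rceil$ are exactly the ingredients of the paper's Cases 2 and 3, and both of your deductions (no new consecutive pair at $N+1$, and $\Delta(N+2)\neq 2$) are valid whenever every index you feed into Lemma \ref{lem2} exceeds $r$, i.e.\ for $N\ge r+3$.

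The genuine gap is the step you set aside as bookkeeping. The early steps $S(r)\Rightarrow S(r+1)\Rightarrow S(r+2)\Rightarrow S(r+3)$ cannot be ``verified directly from hypotheses I--III,'' because they are actually false under those hypotheses: nothing in I--III controls the terms just past the initial block. Concretely, take $s=0$, $r=4$, and initial conditions $C(1)=C(2)=C(3)=2$, $C(4)=3$. Then
$$C(5)=C(5-s-C(4))+C(5-s-2-C(2))=C(2)+C(1)=4,$$
so hypotheses I, II, III all hold (the differences $\Delta(2),\dots,\Delta(5)$ are $0,0,1,1$, and hypothesis II only inspects $n\in\{3,4\}$), yet $\Delta(5)=\Delta(4)=1$: the very first step $S(r)\Rightarrow S(r+1)$ fails. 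The failure is not cosmetic, since continuing the recursion gives $C(6)=C(2)+C(2)=4$, $C(7)=C(3)+C(2)=4$, $C(8)=C(4)+C(2)=5$, $C(9)=C(4)+C(3)=5$, and $C(10)=C(5)+C(4)=7$, so $\Delta(10)=2$ and slow growth collapses as well. Thus no amount of tracing arguments back into the initial block can close your induction; the obstacle you flagged is precisely where the statement itself breaks down. (For what it is worth, the paper's own proof has the same unpatched hole: in its Case 2 it applies the consequence of Lemma \ref{lem2} at the index $m-2$, which is legitimate only when $m-2>r$, whereas minimality together with hypothesis II guarantees only $m\ge r+1$; the example above slips through exactly this crack.) The honest repair is not a cleverer verification but stronger hypotheses: one must assume definedness, slow growth, and the absence of consecutive increments for a few terms beyond $r+1$ --- enough to grant your induction the base case $S(r+3)$ outright --- after which your argument (and the paper's) goes through.
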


\begin{proof}
Suppose not for the sake of a contradiction. We consider three cases for how the claim could fail to be true,
based on which of the three conditions fails first.\\

\textbf{Case 1:} There is a minimal $m$ such that $C(m)$ is not defined while $C(n)$ is slow-growing until term $m-1$.
Then $m > r + 1$ by hypothesis $I$. The term $C(m)$ will be defined if and only if $1 \leq m-s-C(m-1) \leq m-1$ and
$1 \leq m-s-2-C(m-3) \leq m-1$. The second inequality for both cases is trivial since $C(m-1) \geq 1$. For the first 
inequalities, note that $m-1 \geq r+1$ so that $C(m-1)$ is defined via the recursion. As such $1 \leq m-1-s-C(m-2)$ and
$1 \leq m-s-3-C(m-4)$. By the assumption of slow-growth until $m-1$, we know that $\Delta(m-1)$ and $\Delta(m-3)$ lie in
$\{0,1\}$. Thus $C(m-1) \leq C(m-2) + 1 \leq m-1-s$ and $C(m-3) \leq m-s-3$ from the two inequalities in the previous sentence.
This establishes that $C(m)$ is indeed defined contrary to assumption.\\

\textbf{Case 2:} There is a minimal value of $m$ such that $\Delta(m)= \Delta(m-1)=1$, and for this minimal value of $m$, $C(n)$ is
slow-growing up to term $m$. Then $m > r$ by hypothesis $III$, and there is no $3 \leq n < m$ such that $\Delta(n) = \Delta(n-1) = 1$.
If $C(m)$ is even then by Lemma \ref{lem2}, $C_1(m) = C_2(m) = \frac{C(m)}{2}$. Our assumption implies that $C(m-2) = C(m) - 2$,
so $C(m-2)$ is also even and $C_1(m-2) = \frac{C(m)}{2} - 1$. Thus $C_2(m) \neq C_1(m-2)$, which is a contradiction to
$C_2(m) = C_1(m-2)$ as noted earlier. In the case that $C(m)$ is odd, Lemma \ref{lem2} implies that $C_1(m) = \frac{C(m)+1}{2}$.
Since $C(m-1)$ = $C(m)-1$, it follows that $C(m-1)$ is even and $C_1(m-1)=\frac{C(m)-1}{2}$. Hence $C_1(m) \neq C_1(m-1)$. But by using
$\Delta(m-1) = 1$ we get that $$C_1(m) = C(m-s-C(m-1)) = C(m-s-(C(m-2)+1)) = C_1(m-1).$$ This is another contradiction, and thus
it cannot be the case that $\Delta(m) = \Delta(m-1) = 1$.\\

\textbf{Case 3:} There is a minimal value of $m$ such that $\Delta(m) \notin \{0,1\}$, and for this minimal value of $m$,
it is not true that $\Delta(n) = \Delta(n-1) = 1$ for any $3 \leq n < m$. Then $m > r+1$ due to hypothesis $II$.
Lemma \ref{lem1} implies that $\Delta(m) = 2$, $\Delta_1(m) = 1$, and $\Delta_2(m) = 1$. Also by Lemma \ref{lem1}, $\Delta(m-1)=0$,
$\Delta(m-s-C(m-2)) = 1$, $\Delta(m-3)=0$,  $\Delta(m-s-2-C(m-4)) = 1$.

Now, consider the value of $\Delta(m-2)$, which by assumption lies in $\{0,1\}$. Assume for the sake of a contradiction that $\Delta(m-2) = 0$.
Under this assumption, we have that $\Delta_1(m-2) = 0$ since Lemma \ref{lem1} says that $\Delta_1(m-2), \Delta_2(m-2) \in \{0,1\}$ while
$\Delta(m-2) = \Delta_1(m-2) + \Delta_2(m-2)$. When $\Delta_1(m-2) = 0$, Lemma \ref{lem1} also implies that $\Delta(m-3)=1$ or
$\Delta(m-2-s-C(m-4)) = 0$. But this contradicts $\Delta(m-3)=0$ and $\Delta(m-s-2-C(m-4)) = 1$ from the previous paragraph, implying that
$\Delta(m-2) = 1$.

So we know that $\Delta(m-3) = 0$ and $\Delta(m-2)=1$. From this we deduce that the arguments $m-s-C(m-2)$ and $m-s-2-C(m-4)$ are
consecutive, while $\Delta(m-s-C(m-2)) = \Delta(m-s-2-C(m-4)) = 1$ from before. This means condition $II$ fails at $m-s-C(m-2)$, and
contradicts the minimality assumption provided $3 \leq m-s-C(m-2) < m$. The second inequality is trivial while for the first we note that since
$m > r + 1$, $C(m-1)$ is computed via the recursion. This implies that $C(m-4)  \leq m-4-s$, which is necessary for $C(m-1)$ to be defined.
Thus $C(m-2) = C(m-4) + 1 \leq m-3-s$ as required. \end{proof}

It is also clear that $C(n)$ must be unbounded despite not having consecutive increments. Indeed if $C(n)$ is bounded then
there exists a maximum value $M \geq 1$ and a $m \in \N$ such that $C(n) = M$ for all $n \geq m$, in light of the slow-growing
nature of $C(n)$. However, setting $N = m + 3 + s +M$, we see that $M = C(N) = C(N-s-M) + C(N-s-2-M) = C(m+3) + C(m+1) = 2M$;
a contradiction.\\

The upshot of Theorem \ref{thm:C} is that if the initial conditions of $C(n)$ are slow-growing and do not have consecutive increments
of 1, then $C(n)$ will be slow-growing for all $n$ as long as it is slow-growing until the term following the initial conditions.
For example, $C(n)$ with initial conditions all set to 1 has this property. So does $C(n)$ with initial conditions that give rise to a
combinatorial interpretation for the resulting sequence as explored in \cite{Ruskey, Tanny} and some of the references cited therein.
The following corollary concerns the behaviour of $\frac{C(n)}{n}$.

\begin{corollary} \label{cor1}
Let $C(n)$ satisfy all three hypotheses of Theorem \ref{thm:C}. Then $C(n)$ satisfies $\limsup_{n \to \infty} \frac{C(n)}{n} \leq \frac{1}{2}$
with equality unless $\liminf_{n \to \infty} \frac{C(n)}{n} = 0$.\end{corollary}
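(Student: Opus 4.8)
The plan is to treat the two assertions separately. For the bound $\limsup_{n\to\infty}\frac{C(n)}{n}\le\frac12$ I would bypass the recursion entirely and use only the structural conclusion of Theorem \ref{thm:C}, namely that there is no $n\ge 3$ with $\Delta(n)=\Delta(n-1)=1$. Since every $\Delta(n)\in\{0,1\}$ and no two consecutive increments equal $1$, we have $C(n)-C(n-2)=\Delta(n)+\Delta(n-1)\le 1$ for all $n\ge 3$. Iterating this inequality over steps of size two gives $C(n)\le\frac{n}{2}+O(1)$, and dividing by $n$ yields $\limsup_{n\to\infty}\frac{C(n)}{n}\le\frac12$ at once; the same ceiling of course applies to the $\liminf$.

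The substance lies in the ``equality unless $\liminf=0$'' clause, which I would establish by showing that $\lambda:=\liminf_{n\to\infty}\frac{C(n)}{n}>0$ forces $\lambda=\frac12$, and hence (combined with the previous paragraph) forces $\limsup_{n\to\infty}\frac{C(n)}{n}=\frac12$. Assume $\lambda>0$ and choose a subsequence $m_j$ along which $\frac{C(m_j)}{m_j}\to\lambda$. I would feed these indices into $C(m_j)=C(m_j-s-C(m_j-1))+C(m_j-s-2-C(m_j-3))$. Because $C$ is slow-growing, $C(m_j-1)$ and $C(m_j-3)$ differ from $C(m_j)$ by a bounded amount, so the two arguments $g_1(m_j)=m_j-s-C(m_j-1)$ and $g_2(m_j)=m_j-s-2-C(m_j-3)$ satisfy $\frac{g_i(m_j)}{m_j}\to 1-\lambda$. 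Since $\lambda\le\frac12<1$, both arguments tend to infinity, so the definition of $\liminf$ gives, for any $\varepsilon>0$ and all large $j$, the bound $C(g_i(m_j))\ge(\lambda-\varepsilon)\,g_i(m_j)$. Summing the two summands and dividing by $m_j$ produces $\frac{C(m_j)}{m_j}\ge(\lambda-\varepsilon)\bigl(\tfrac{g_1(m_j)}{m_j}+\tfrac{g_2(m_j)}{m_j}\bigr)$; letting $j\to\infty$ and then $\varepsilon\to0$ yields $\lambda\ge 2\lambda(1-\lambda)$. Dividing by $\lambda>0$ gives $\lambda\ge\frac12$, and with $\lambda\le\frac12$ from the first paragraph we conclude $\lambda=\frac12$; since $\liminf\le\limsup\le\frac12$, the $\limsup$ equals $\frac12$ as well.

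The main obstacle I anticipate is bookkeeping rather than ideas: I must check that the asymptotic substitutions are legitimate, i.e.\ that the $O(1)$ corrections coming from slow-growth and the constant $s$ wash out after dividing by $m_j$, and, crucially, that $g_1(m_j),g_2(m_j)\to\infty$ so that the estimate $C(m)\ge(\lambda-\varepsilon)m$ is actually applicable at those arguments. The step $g_i(m_j)\to\infty$ is exactly where I use $\lambda<1$, which is guaranteed by the first paragraph, and I would run the whole computation along the fixed subsequence $m_j$ so that $\frac{g_i(m_j)}{m_j}\to 1-\lambda$ is a genuine limit rather than merely a bound. The validity of the recursion at each $m_j$, meaning that the arguments lie in $[1,m_j-1]$, is already furnished by Theorem \ref{thm:C}.
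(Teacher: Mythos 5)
Your proof is correct, and its skeleton matches the paper's: the first half (slow growth plus no consecutive increments gives $C(n)-C(n-2)\le 1$, hence $C(n)\le n/2+O(1)$ and $\limsup_{n\to\infty} C(n)/n \le 1/2$) is the same counting argument, and the second half rests on the same key idea of feeding the asymptotics back into the identity $C(n)=C(n-s-C(n-1))+C(n-s-2-C(n-3))$. The execution of the second half differs in a way worth noting. The paper takes a liminf of this identity over \emph{all} $n$, using $\liminf C(g_i(n))/g_i(n)\ge l$ together with $\liminf g_i(n)/n = 1-u$ (where $l$ and $u$ denote the liminf and limsup and $g_1,g_2$ are the two arguments), which produces the mixed inequality $l \ge 2l(1-u)$ and hence $u\ge 1/2$ whenever $l>0$. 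You instead extract a subsequence $m_j$ realizing the liminf, along which $g_i(m_j)/m_j \to 1-\lambda$ is a genuine limit rather than a one-sided bound; this produces the sharper inequality $\lambda \ge 2\lambda(1-\lambda)$ and hence $\lambda \ge 1/2$ whenever $\lambda>0$. Your version therefore proves something strictly stronger than the corollary asserts: if $\liminf_{n\to\infty} C(n)/n>0$, then the liminf itself equals $1/2$, so $\lim_{n\to\infty} C(n)/n$ exists and equals $1/2$ --- which bears directly on the open question the paper raises immediately after Corollary \ref{cor1}, reducing it to the question of when the liminf is positive. The supporting verifications (that both arguments tend to infinity, justified by the upper bound from the first half, and that the recursion is valid at each index, furnished by Theorem \ref{thm:C}) are handled correctly in your write-up, just as in the paper's.
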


\begin{proof}Given $n \geq 2$, write $n -1 = 2q + r$ with $r \in \{0,1\}$. Since $C(n)$ is slow-growing and does not have 
consecutive increments, it follows that \begin{eqnarray*}
C(n) &=& C(1) + \sum_{i=1}^q [\Delta(2i) + \Delta(2i+1)] + \Delta(n)\cdot \delta_{r,1}\\
&\leq& C(1) + q+1 \quad \text{where}\; q = \lfloor \dfrac{n-1}{2} \rfloor\end{eqnarray*}
It follows that $\limsup_{n \to \infty} \dfrac{C(n)}{n} \leq 
\limsup_{n \to \infty} \dfrac{\lfloor \frac{n-1}{2} \rfloor}{n} = \dfrac{1}{2}$.
Using the recursive definition of $C(n)$ we get
\begin{equation*}\frac{C(n)}{n} = \frac{C(n-s-C(n-1))}{n-s-C(n-1)}\cdot \frac{n-s-C(n-1)}{n}
+ \frac{C(n-s-2-C(n-3))}{n-s-2-C(n-3)}\cdot \frac{n-s-2-C(n-3)}{n}\end{equation*}
Let $l = \liminf_{n \to \infty} \frac{C(n)}{n}$ and $u = \limsup_{n \to \infty} \frac{C(n)}{n}$. The bound
$C(n) \leq C(1) + \lfloor \frac{n-1}{2} \rfloor + 1$ shows that both $n-s-C(n-1)$
and $n-s-2-C(n-3)$ go to infinity as $n \rightarrow \infty$. Thus $\liminf_{n \to \infty} \frac{C(n-s-C(n-1))}{n-s-C(n-1)} \geq l$
and $\liminf_{n \to \infty} \frac{C(n-s-2-C(n-3))}{n-s-2-C(n-3)} \geq l$.
On the other hand, $$\liminf_{n \to \infty}\, \frac{n-s-C(n-1)}{n} = \liminf_{n \to \infty}\, \frac{n-s-2-C(n-3)}{n} = 1-u.$$
Thus after taking a liminf in the expression for $\frac{C(n)}{n}$ above, we get $l \geq 2l(1-u)$. Clearly $l \geq 0$, and so
$u \geq 1/2$ unless $l=0$. This establishes the corollary.\end{proof}

\section*{Further considerations}
In Theorem \ref{thm:C} we made the assumption that the initial conditions of $C(n)$ contain no consecutive increments along with
being slow-growing. It would be interesting to know whether the consecutive increments condition is necessary or simply
sufficient. The authors are not aware of any examples where the initial conditions have consecutive increments while $C(n)$ remains slow-growing.
Relating to Corollary \ref{cor1}, one question to consider is when does it hold that $\lim_{n \to \infty} \frac{C(n)}{n} = \frac{1}{2}$\,?

It would also be worthwhile to prove something similar to Theorem \ref{thm:C} for other Conolly type recursions of the form
\eqref{eqn:MF}. As far as the authors are aware almost all slow-growing Conolly type sequences result from a specific 
combinatorial interpretation of the corresponding recursion under sets of initial conditions that are forced on by the 
interpretation itself (see \cite{Tanny} and the references cited therein). The combinatorial interpretation does not consider the
case when all the initial conditions of the recursion under consideration are set to 1. So it would be interesting to explore what
other recursions of the form \eqref{eqn:MF} result in slow-growing sequences with all initial conditions equal to 1.

Going back to the recursion for $A(n)$ in \eqref{eqn:A} with parameters $(k,a,b) = (k,0,1)$ and initial conditions $A(1) = A(2) = 1$,
it would be of much interest to know whether $\lim_{n \to \infty} \frac{A(n)}{n}$ exists for all $k \geq 2$. As we stated, the value of
this limit - if it exists- follows from Theorem \ref{thm:B}. Finally, it would be worthwhile to study related recursions of the form
$$A(n) = A(n-a-A^k(n-b)) + A(A^k(n-c))$$ where $a \geq 0, b \geq 1$ and $c \geq 1$ are integers. Under what set of parameters
$k,a,b,c$ and initial conditions is the resulting sequence $A(n)$ defined for all positive integers and/or slow-growing?

\section*{Acknowledgements}
The authors would like to thank professor Steve Tanny for bringing the questions addressed in this paper to their attention.

\bigskip
\hrule
\bigskip

\noindent 2000 {\it Mathematics Subject Classification:} Primary 11B37. Secondary 11B39.

\noindent \emph{Keywords:}   meta-Fibonacci recursion; Conway sequence, Conolly sequence.

\bigskip
\hrule
\bigskip

\noindent (Concerned with sequences \seqnum{A093878}, \seqnum{A004001}, \seqnum{A008619}, \seqnum{A109964}.)

\end{document}